\numberwithin{equation}{section}
                        \theoremstyle{plain}
\newcommand{\psdraw}[2]
         {\begin{array}{c} \hspace{-1.3mm}
         \raisebox{-4pt}{\psfig{figure=#1.eps,width=#2}}
         \hspace{-1.9mm}\end{array}}
\newcommand\no[1]{}
\newtheorem{theorem}{Theorem}[section]
\newtheorem{lemma}[theorem]{Lemma}
\newtheorem{corollary}[theorem]{Corollary}
\theoremstyle{definition}
\newtheorem{remark}[theorem]{Remark}
\newcommand{\C}{\Bbb C}
\newcommand{\tr}{{\mathrm{tr}\,}}
\newcommand{\la}{\langle}
\newcommand{\ra}{\rangle}
\def\BC{\mathbb C}
\def\C{\mathbb C}
\def\BZ{\mathbb Z}
\def\CL{\mathcal L}
\def\la{\langle}
\def\ra{\rangle}
\def\be { \begin{equation} }
\def\ee { \end{equation} }
\begin{document}

\title[On the volume of double twist link cone-manifolds]
{On the volume of double twist link cone-manifolds}

\author{Anh T. Tran}

\begin{abstract} 
We consider the double twist link $J(2m+1, 2n+1)$ which is the two-bridge link corresponding to the continued fraction $(2m+1)-1/(2n+1)$.
It is known that $J(2m+1, 2n+1)$ has reducible nonabelian $SL_2(\BC)$-character variety if and only if $m=n$. 
In this paper we give a formula for the volume of hyperbolic cone-manifolds of $J(2m+1,2m+1)$. We also give a formula for the A-polynomial 2-tuple 
corresponding to the canonical component of the character variety of $J(2m+1,2m+1)$.
\end{abstract}

\thanks{2000 {\it Mathematics Subject Classification}.
Primary 57M27, Secondary 57M25.}

\thanks{{\it Key words and phrases.\/}
canonical component, cone-manifold, hyperbolic volume, the A-polynomial,
two-bridge link, double twist link.}

\address{Department of Mathematical Sciences, The University of Texas at Dallas, 
Richardson, TX 75080, USA}
\email{att140830@utdallas.edu}

\maketitle

\section{Introduction}\label{sec:intro}

For a hyperbolic link $\CL$ in $S^3$, let $E_\CL = S^3 \setminus \CL$ be the link exterior and let $\rho_{\text{hol}}$ be a holonomy representation of $\pi_1(E_\CL)$ into $PSL_2(\BC)$. Thurston \cite{Th} showed that $\rho_{\text{hol}}$ can be deformed into an $\ell$-parameter family $\{\rho_{\alpha_1, \cdots, \alpha_\ell}\}$ of representations to give a corresponding family $\{E_\CL(\alpha_1, \cdots, \alpha_\ell)\}$ of singular complete hyperbolic manifolds, where $\ell$ is the number of components of $\CL$. In this paper we  consider only the case where all of $\alpha_j$'s are equal to a single parameter $\alpha$. In which case we also denote $E_\CL(\alpha_1, \cdots, \alpha_\ell)$ by $E_\CL(\alpha)$. These $\alpha$'s and $E_\CL(\alpha)$'s are called the cone-angles and hyperbolic cone-manifolds of $\CL$, respectively. 
We consider the complete hyperbolic structure on a link complement as the cone-manifold structure with cone-angle zero. It is known that for a two-bridge  link $\CL$ there exists an angle $\alpha_\CL \in [\frac{2\pi}{3},\pi)$ such that $E_\CL(\alpha)$ is hyperbolic for $\alpha \in (0, \alpha_\CL)$, Euclidean for $\alpha =\alpha_\CL$, and spherical for $\alpha \in (\alpha_\CL, \pi)$  \cite{HLM, Ko, Po, PW}. A method for computing the volume of hyperbolic cone-manifolds of links was outlined in \cite{HLM}, and explicit volume formulas have been known for hyperbolic cone-manifolds of the links $5_1^2, \, 6_2^2, \, 6_3^2, \, 7_3^2$ (see \cite{HLMR-link} and references therein) and of twisted Whitehead links \cite{Tr-Whitehead}. 

For integers $m$ and $n$, consider the double twist link $J(2m+1,2n+1)$ 
which is the two-bridge link corresponding to the continued fraction $(2m+1)-1/(2n+1)$ (see Figure 1). 
It was shown by Petersen and the author \cite{PT} that $J(2m+1, 2n+1)$ has reducible nonabelian $SL_2(\BC)$-character variety if and only if $m=n$. 
In this paper we are interested in the double twist link $\CL_m = J(2m+1,2m+1)$, since the canonical component of the character variety of $\CL_m$ has a rather nice form (see Remark \ref{fact}). Here a canonical component of the character variety of a hyperbolic link $\CL$ is a component containing the character of a lift of a holonomy representation of $\pi_1(E_\CL)$ to $SL_2(\BC)$. 

\begin{figure}[htpb]
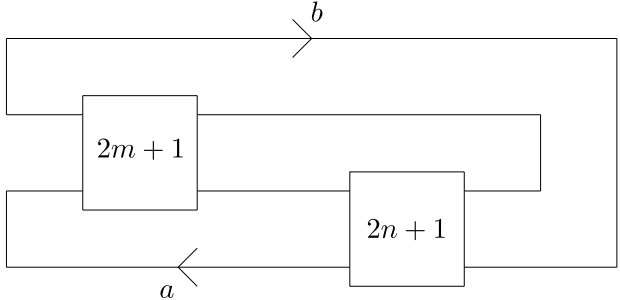
 \label{Wh}
$$\psdraw{drawing-doubletwist-new}{4in}$$
\caption{The double twist link $J(2m+1,2n+1)$. Here $2m+1$ and $2n+1$ denote the numbers of half twists
in the boxes. Positive (resp. negative) numbers correspond to right-handed (resp. left handed)
twists.}
\end{figure}

Let $\{S_j(v)\}_{j \in \BZ}$ be the sequence of Chebychev polynomials of the second kind defined by 
$S_0(v)=1$, $S_1(v)=v$ and $S_{j}(v) = v S_{j-1}(v) - S_{j-2}(v)$ 
for all integers $j$. Let 
$$
R_{\CL_m}(s,z) = (s^2+s^{-2}+2-z) \big( S^2_m(z) + S^2_{m-1}(z) \big) - 2(s^2+s^{-2}) S_m(z) S_{m-1}(z) -z.
$$
The volume of the hyperbolic cone-manifold of $\CL_m$ is computed as follows. 

\begin{theorem} \label{thm:volume}
For $\alpha \in (0, \alpha_{\CL_m})$ we have
$$\emph{Vol} \, E_{\CL_m}(\alpha) = \int_{\alpha}^{\pi} \log \left| \frac{S_m(z) - e^{-i\omega} S_{m-1}(z)}{S_m(z) - e^{i\omega} S_{m-1}(z)} \right| d\omega$$ 
where $z$, with $\emph{Im}(S_{m-1}(z) \overline{S_m(z)}) \ge 0$, is a certain root of $R_{\CL_m}(e^{i\omega/2},z) =0$.
\end{theorem}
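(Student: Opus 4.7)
The plan is to apply Schläfli's formula for hyperbolic cone-manifolds. Setting both cone-angles equal to a common parameter $\omega$, the formula gives
$$\frac{d\,\mathrm{Vol}\, E_{\CL_m}(\omega)}{d\omega} = -\frac{1}{2}\bigl(\ell_1(\omega) + \ell_2(\omega)\bigr),$$
where $\ell_j(\omega)$ is the length of the $j$-th singular component. The $\BZ/2$ symmetry of $\CL_m = J(2m+1,2m+1)$ swapping its two components forces $\ell_1 = \ell_2$, so the derivative equals $-\ell_1(\omega)$. It then suffices to compute $\ell_1(\omega)$ in closed form and to integrate against the correct boundary behavior as $\omega$ crosses the Euclidean angle $\alpha_{\CL_m}$.

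First I would set up a Riley-style nonabelian representation $\rho\colon \pi_1(E_{\CL_m}) \to SL_2(\BC)$, with meridians $a, b$ of the two bridges satisfying $\tr \rho(a) = \tr \rho(b) = s + s^{-1}$ for $s = e^{i\omega/2}$, and with $z$ taken to be (an affine transform of) $\tr \rho(ab)$. Using the standard two-bridge word $w$ for $\CL_m$ together with the Chebyshev identity
$$\rho\bigl((ab)^k\bigr) = S_{k-1}(z)\,\rho(ab) - S_{k-2}(z)\,I,$$
the defining relation reduces to a scalar equation whose canonical component is cut out by $R_{\CL_m}(s,z) = 0$, matching the factorization of the character variety indicated in the introduction and in \cite{PT}.

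Next I would compute the longitude holonomy of one component. Writing the preferred longitude as an explicit word in $a, b$ dictated by the two twist regions and again collapsing the resulting $SL_2(\BC)$ matrix product via Chebyshev polynomials, I expect the longitude eigenvalue to factor in the form
$$L(\omega) \;=\; \frac{S_m(z) - e^{-i\omega} S_{m-1}(z)}{S_m(z) - e^{i\omega} S_{m-1}(z)}.$$
The singular-locus length is then $\ell_1(\omega) = \log|L(\omega)|$ (up to the constant absorbed by Schläfli), with the correct branch of $z$ pinned down by the normalization $\mathrm{Im}(S_{m-1}(z)\,\overline{S_m(z)}) \ge 0$, selecting the geometric root of $R_{\CL_m}(e^{i\omega/2}, z) = 0$.

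The main obstacle I foresee is the Chebyshev collapse of the longitude word into the clean quotient displayed above: the longitude is a braided word whose length grows with $m$, and producing the factorization requires showing that the $\omega$-dependence enters only through $e^{\pm i\omega}$, with the $z$-dependence packaged entirely into $S_m(z)$ and $S_{m-1}(z)$. A secondary point to verify is that $|L(\omega)| = 1$ on the spherical interval $(\alpha_{\CL_m}, \pi)$, so that the integrand $\log|L(\omega)|$ vanishes there and the Schläfli integral from $\alpha$ to $\alpha_{\CL_m}$ extends cleanly to the upper limit $\pi$ as stated.
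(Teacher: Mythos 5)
Your overall strategy---Schl\"afli's formula with equal cone angles, the two-component symmetry giving $\ell_1=\ell_2$, a Riley-type nonabelian representation with $s_1=s_2=s=e^{i\omega/2}$ restricted to the canonical component cut out by $R_{\CL_m}(s,z)=0$, and the identification of the integrand with the log-modulus of the longitude holonomy---is exactly the paper's. However, the step you defer as ``the main obstacle,'' namely producing the quotient $\frac{S_m(z)-e^{-i\omega}S_{m-1}(z)}{S_m(z)-e^{i\omega}S_{m-1}(z)}$, is the entire content of the proof, and the mechanism you anticipate (a formal Chebyshev collapse of the longitude word, valid identically in $z$) is not how it works and would not succeed as stated. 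What the word $w=cd^n$ with $c=(b^{-1}a)^m$ actually yields, after setting $t=\tr\rho(d)$ equal to $z$ on the canonical component, is a \emph{product}, not a quotient:
$$w_{11}=\big(S_m(z)-S_{m-1}(z)\big)\big(S_m(z)-s^{-2}S_{m-1}(z)\big),$$
and the longitude eigenvalue is $w_{11}s^{-2m}$, of modulus $|w_{11}|$.

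Converting this product into the displayed quotient requires using the defining equation $R_{\CL_m}(s,z)=0$ itself: combined with the Chebyshev identity $S_m^2(z)+S_{m-1}^2(z)=1+zS_m(z)S_{m-1}(z)$, it gives closed forms for $S_m(z)S_{m-1}(z)$ and $S_m^2(z)+S_{m-1}^2(z)$ as rational functions of $s$ and $z$, from which one deduces the on-shell identity
$$\big(S_m(z)-S_{m-1}(z)\big)^2\big(S_m(z)-s^{2}S_{m-1}(z)\big)\big(S_m(z)-s^{-2}S_{m-1}(z)\big)=1,$$
and hence $w_{11}^2=\frac{S_m(z)-s^{-2}S_{m-1}(z)}{S_m(z)-s^{2}S_{m-1}(z)}$. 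In particular the displayed expression is the \emph{square} of the longitude eigenvalue (up to a unimodular factor), so its log-modulus equals the full singular-locus length $\ell_1=2\log|w_{11}|$ with no residual factor of $2$; your bookkeeping ``$\ell_1=\log|L|$ up to the constant absorbed by Schl\"afli'' conceals exactly the normalization that this identity settles, and the condition $\mathrm{Im}\big(S_{m-1}(z)\overline{S_m(z)}\big)\ge 0$ is precisely the choice $|w_{11}|\ge 1$ of the expanding eigenvalue. Until you establish the product identity above (or an equivalent), the proposal has a genuine gap at its central step.
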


Note that the above volume formula for the hyperbolic cone-manifold $E_{\CL_m}(\alpha)$ 
depends on the choice of a root $z$, with $\text{Im}(S_{m-1}(z) \overline{S_m(z)}) \ge 0$, of  $R_{\CL_m}(e^{i\omega/2},z) =0$. 
In numerical approximations, we choose the root $z$ which gives the maximal volume. 

It is known that the volume of the $k$-fold cyclic covering over a hyperbolic link $\CL$ 
is $k$ times the volume of the hyperbolic cone-manifold of $\CL$ with cone-angle $2\pi /k$.
As a direct consequence of Theorem \ref{thm:volume}, we obtain the following. 

\begin{corollary}
The hyperbolic volume of the $k$-fold cyclic covering over 
the two-bridge link $\CL_m$, with $k \ge 3$, 
is given by the following formula
$$k \, \emph{Vol} \, E_{\CL_m}(\frac{2\pi}{k}) = k \int_{\frac{2\pi}{k}}^{\pi} \log \left| \frac{S_m(z) - e^{-i\omega} S_{m-1}(z)}{S_m(z) - e^{i\omega} S_{m-1}(z)} \right| d\omega$$ 
where $z$, with $\emph{Im} \left( S_{m-1}(z)S_m(z) \right) \ge 0$, is a certain root of $R_{\CL_m}(e^{i\omega/2},z) =0$.
\end{corollary}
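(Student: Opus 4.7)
The corollary is essentially a packaging of Theorem~\ref{thm:volume} together with the classical fact (recalled immediately before the statement) that the volume of the $k$-fold cyclic cover of a hyperbolic link $\CL$ equals $k$ times the volume of the hyperbolic cone-manifold $E_\CL(2\pi/k)$. The plan therefore has only two small steps.

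First I would verify that the cone-angle $\alpha = 2\pi/k$ is admissible, i.e.\ that it lies in the interval $(0, \alpha_{\CL_m})$ on which $E_{\CL_m}(\alpha)$ is hyperbolic. The introduction recalls that for any two-bridge link $\CL$ one has $\alpha_{\CL} \in [\tfrac{2\pi}{3}, \pi)$; in particular $\alpha_{\CL_m} \ge \tfrac{2\pi}{3}$. The hypothesis $k \ge 3$ then gives $\tfrac{2\pi}{k} \le \tfrac{2\pi}{3} \le \alpha_{\CL_m}$, so $E_{\CL_m}(2\pi/k)$ is genuinely hyperbolic and Theorem~\ref{thm:volume} may be applied with $\alpha = 2\pi/k$.

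Second I would substitute $\alpha = 2\pi/k$ into the volume formula of Theorem~\ref{thm:volume}, multiply both sides by $k$, and use the cover-to-cone-manifold volume identity to replace the left-hand side by the volume of the $k$-fold cyclic cover. This directly produces the stated formula, with the same root-selection convention on $z$ inherited from Theorem~\ref{thm:volume}. There is no real obstacle here: the whole content of the corollary is the observation that the bound $k \ge 3$ is exactly what ensures the cone-angle $2\pi/k$ sits in the hyperbolic regime where Theorem~\ref{thm:volume} is valid.
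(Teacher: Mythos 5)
Your proposal is correct and is exactly the paper's argument: the paper presents the corollary as a direct consequence of Theorem~\ref{thm:volume} combined with the standard fact that the $k$-fold cyclic cover has volume $k\,\mathrm{Vol}\,E_{\CL_m}(2\pi/k)$, with $k\ge 3$ placing the cone angle in the hyperbolic range. (The only hair to split, which the paper also glosses over, is that $k=3$ gives $2\pi/k = 2\pi/3$, which lies in the open interval $(0,\alpha_{\CL_m})$ only when $\alpha_{\CL_m} > 2\pi/3$ strictly.)
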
 

The A-polynomial of a knot in $S^3$ was introduced by Cooper, Culler, Gillet, Long and Shalen \cite{CCGLS} in the 90's. 
It describes the $SL_2(\BC)$-character variety of the knot complement as viewed from the boundary torus. 
The A-polynomial carries a lot of information about the topology of the knot.
 For example, the sides of the Newton polygon of the A-polynomial of a knot in $S^3$ give rise to incompressible surfaces in the knot complement \cite{CCGLS}. 
 A generalization of the A-polynomial to links in $S^3$ was proposed by Zhang \cite{Zh}. For an $\ell$-component link in $S^3$, 
 Zhang defined a polynomial $\ell$-tuple link invariant called the A-polynomial $\ell$-tuple. 
 The A-polynomial 1-tuple of a knot is just its A-polynomial.
 The A-polynomial $\ell$-tuple also caries important information about the topology of the link. 
 For example, it can be used to construct concrete examples of hyperbolic link manifolds with non-integral traces \cite{Zh}.
 
 The A-polynomial $2$-tuple  has been computed for a family of two bridge links called twisted Whitehead links \cite{Tr-Whitehead}.
 In this paper we compute the A-polynomial 2-tuple for 
 the canonical component of the character variety of $\CL_m = J(2m+1,2m+1)$. 
  
\begin{theorem} \label{thm:A-polynomial}
Let $\{Q_j(s,w)\}_{j \in \BZ}$ be the sequence of polynomials in two variables $s,w$ defined by $Q_{-1} = Q_0 = 2$ and
$$Q_j = \alpha Q_{j-1} - Q_{j-2} +\beta$$ where
\begin{eqnarray*}
\alpha &=& (s^8+s^4) w^4+(-2 s^8+6 s^6+6 s^4-2 s^2) w^3 + (s^8-12 s^6+34 s^4-12 s^2+1) w^2\\
&& \qquad \qquad \quad  + \, (-2 s^6+6 s^4+6 s^2-2) w+s^4+1,\\
\beta &=& -2(s^2-1)^2 \left( s^4 w^4 - (s^4+s^2) w^3 - 6 s^2 w^2 - (s^2+1)w+1\right).
\end{eqnarray*}
Then the A-polynomial 2-tuple corresponding to the canonical component of the character variety of $\CL_m$ is  
$[A(M,L), A(M,L)]$ where $A(M,L) = (L-1)Q_m(M, LM^{2m})$.
\end{theorem}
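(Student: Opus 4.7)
The plan is to start from the standard two-bridge presentation
$\pi_1(E_{\CL_m}) = \la a, b \mid w a = b w\ra$,
where $a, b$ are meridians of the two components of $\CL_m$ and $w = w(a,b)$ is the two-bridge word read off the continued fraction $(2m+1) - 1/(2m+1)$. I would put each nonabelian representation $\rho : \pi_1(E_{\CL_m}) \to SL_2(\BC)$ into Riley normal form
\[
\rho(a) = \begin{pmatrix} s & 1 \\ 0 & s^{-1}\end{pmatrix}, \qquad
\rho(b) = \begin{pmatrix} s & 0 \\ -y & s^{-1}\end{pmatrix},
\]
and pass to the trace coordinate $z = \tr \rho(ab^{-1})$. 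The polynomial $R_{\CL_m}(s,z)$ displayed in the introduction cuts out the nonabelian character variety, and by Remark \ref{fact} the canonical component is a distinguished irreducible piece of $\{R_{\CL_m}(s,z)=0\}$; so one may work throughout modulo $R_{\CL_m}(s,z)=0$.

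Next, compute $\rho(\lambda_a)$ for the preferred longitude of the component carrying $a$. Because $\lambda_a$ is assembled from $(2m+1)$-twist subwords, its image decomposes via Chebyshev polynomials: the $j$-th power of a trace-$z$ element expands as $S_{j-1}(z) X - S_{j-2}(z) I$, so $\rho(\lambda_a)$ has entries in $\BZ[s^{\pm 1}, z, S_m(z), S_{m-1}(z)]$. Extracting the longitude eigenvalue $\ell$ (the appropriate diagonal entry after upper-triangularization) and accounting for framing, one sees that the eigenvalue produced by the Wirtinger-type computation differs from that of the topological preferred longitude by a central factor $s^{2m}$, giving $L = \ell\, s^{-2m}$ and hence suggesting the substitution $w = L s^{2m}$. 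With this substitution, the relation defining $\ell$ becomes a polynomial identity $Q_m(s,w) = 0$ on the canonical component.

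The main computational step is to verify the stated recursion for $Q_j$. I would argue by induction on $m$, using the fact that attaching one more full twist to the twist box multiplies the underlying $SL_2$-matrix by a fixed $2 \times 2$ block whose characteristic polynomial in the $(s,w)$-coordinates, after reduction modulo $R_{\CL_m}(s,z)=0$, gives a second-order recursion. The homogeneous part has leading coefficient $\alpha(s,w)$, the degree-$4$-in-$w$ polynomial stated in the theorem, while the affine term $\beta(s,w)$ records the constant contribution of the framing correction at each inductive step; matching with the explicit formulas is then a direct Chebyshev manipulation, with $Q_{-1} = Q_0 = 2$ forced by the cases $m \le 0$. The prefactor $L-1$ in $A(M,L)$ reflects the abelian locus, on which $\lambda$ acts trivially, and the equality of the two entries of the A-polynomial 2-tuple is forced by the $\BZ/2$-symmetry of $J(2m+1,2m+1)$ interchanging its two components.

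The main obstacle will be producing the explicit polynomials $\alpha$ and $\beta$. Setting up the right $SL_2$-block for the twist generator and tracking its interaction with the relation $R_{\CL_m}(s,z) = 0$ under the change of variable $w = L s^{2m}$ is a delicate algebraic computation; the precise coefficients and sign patterns in $\alpha$ and $\beta$ leave little room for error and will require careful bookkeeping at the base cases $m = 0, 1, 2$ before the inductive step can be carried out cleanly.
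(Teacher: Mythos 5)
Your overall toolkit (Riley normal form, Chebyshev expansion of the twist words, the substitution $w = Ls^{2m}$, and the $\BZ/2$-symmetry giving equality of the two entries of the tuple) matches the paper's, but two essential steps are missing. First, since $\CL_m$ is a two-component link, the A-polynomial $2$-tuple is not computed on the whole canonical component: by Zhang's definition one must first restrict to the subvariety where the meridian and longitude of the \emph{other} component have trace $\pm 2$, i.e.\ impose $s_2^2 = 1$ and $(\overline{w}_{11})^2 = 1$, before projecting to $(M,L)$. Your setup forces $\rho(a)$ and $\rho(b)$ to have the same eigenvalue $s$ from the outset and never imposes these trace conditions, so you are in effect running a knot A-polynomial computation; without the restriction the image in $(\BC^*)^2$ is two-dimensional and no single defining polynomial $A(M,L)$ exists. (Relatedly, the presentation of a two-bridge \emph{link} group is $\la a,b \mid aw = wa\ra$, not $\la a,b\mid wa = bw\ra$: the meridians $a,b$ lie on distinct components and are not conjugate.)

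Second, the heart of the computation is the elimination of the trace variable $z = \tr\rho(ab^{-1})$, and your sketch does not say how this is done. In the paper, after setting $s_2 = \pm 1$ the canonical-component equation $t=z$ factors as $(x-z)\bigl(2 + (z-x)S_m(z)S_{m-1}(z)\bigr) = 0$. The branch $z = x$ yields $\overline{w}_{11} = 1$ and $w_{11} = s^{2m}$, i.e.\ $L = 1$; this, not the abelian locus, is the source of the factor $L-1$. On the other branch one gets $\overline{w}_{11} = -1$ and $S_m(z) = rS_{m-1}(z)$ with $r = (sw_{11}+s^{-1})/(w_{11}+1)$, whence $z = 2(r+r^{-1}) - (s+s^{-1})$; only after this explicit elimination does $P_m = 2+(z-x)S_m(z)S_{m-1}(z)$ become a polynomial $Q_m(s,w_{11})$, and the recursion with the stated $\alpha,\beta$ follows from the Chebyshev identity $S_jS_{j-1} = (z^2-2)S_{j-1}S_{j-2} - S_{j-2}S_{j-3} + z$ (Lemma \ref{chev3}), not from a transfer-matrix or characteristic-polynomial argument. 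Your proposed induction on the number of twists cannot get off the ground without first producing this expression of $z$ in the $(s,w)$-coordinates.
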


The paper is organized as follows. In Section \ref{sec:prelim} we review the definition of the A-polynomial $\ell$-tuple of an $\ell$-component link in $S^3$. In Section \ref{sec:DTL} we compute the nonabelian $SL_2(\BC)$-representations of the double twist link $J(2m+1,2n+1)$. In Section \ref{sec:volume} we compute the volume of hyperbolic cone-manifolds of $\CL_m = J(2m+1,2m+1)$ and give a proof of Theorem \ref{thm:volume}. The last section is devoted to the computation of the A-polynomial 2-tuple for the canonical component of the character variety of $\CL_m$ and a proof of Theorem \ref{thm:A-polynomial}.

\section{The A-polynomial $\ell$-tuple of a link} \label{sec:prelim}

\subsubsection{Character varieties}

The set of characters of representations of a finitely generated group $G$ into $SL_2(\BC)$ is known to be a algebraic set over $\BC$ \cite{CS, LM}. It is called the character variety of $G$ and denoted by $\chi(G)$. For example, the character variety $\chi(\BZ^2)$ of the free abelian group on 2 generators $\mu,\lambda$ is isomorphic to $(\BC^*)^2/\tau$, where $(\BC^*)^2$ is the
set of non-zero complex pairs $(M,L)$ and $\tau: (\BC^*)^2 \to (\BC^*)^2$ is the involution
defined by $\tau(M,L)=(M^{-1},L^{-1})$. This fact can be proved by noting that every representation $\rho: \BZ^2 \to SL_2(\BC)$ is
conjugate to an upper diagonal one, with $M$ and $L$ being the
upper left entries of $\rho(\mu)$ and $\rho(\lambda)$ respectively. 

\subsubsection{The A-polynomial}

Suppose $\CL = K_1 \sqcup \dots \sqcup K_\ell$ be an $\ell$-component link in $S^3$. 
Let $E_\CL = S^3 \setminus \CL$ be the link exterior and $T_1, \dots, T_\ell$ 
the boundary tori of $E_\CL$ corresponding to $K_1, \dots, K_\ell$ respectively. 
Each $T_j$ is a torus whose fundamental group  is free abelian of rank
two. An orientation of $K_j$ will define a unique pair of an
oriented meridian $\mu_j$ and an oriented longitude $\lambda_j$ such that the linking
number between the longitude $\lambda_j$ and the knot $K_j$ is 0. The pair provides
an identification of $\chi(\pi_1(T_j))$ and $(\BC^*)_j^2/\tau_j$, where $(\BC^*)_j^2$ is the
set of non-zero complex pairs $(M_j,L_j)$ and $\tau_j$ is the involution
$\tau(M_j,L_j)=(M_j^{-1},L_j^{-1})$, which actually does not depend on the orientation of $K_j$.

The inclusion $T_j \hookrightarrow E_{\CL}$ induces the restriction
map
$$\rho_j : \chi(\pi_1(E_{\CL})) \longrightarrow \chi(\pi_1(T_j))\equiv (\BC^*)_j^2/\tau_j.$$
For each $\gamma \in \pi_1(E_{\CL})$ let $f_{\gamma}$ be the regular function on $\chi(\pi_1(E_{\CL}))$ defined by $$f_{\gamma}(\chi_{\rho})=(\chi_{\rho}(\gamma))^2-4 = (\tr \rho(\gamma))^2-4,$$ where $\chi_\rho$ denotes the character of a representation 
$\rho: \pi_1(E_{\CL}) \to SL_2(\BC)$. Let $\chi_j(\pi_1(E_{\CL}))$ be the subvariety of $\chi(\pi_1(E_{\CL}))$ defined by $f_{\mu_k}=0,~f_{\lambda_k}=0$ for all $k \not= j.$
Let $Z_j$ be the image of $\chi_j(\pi_1(E_{\CL}))$ under 
$\rho_j$ and  $\hat Z_j \subset (\BC^*)_j^2$ the lift of $Z_j$ under the
projection $(\BC^*)_j^2 \to (\BC^*)_j^2/\tau_i$. It is known that the Zariski closure of
$\hat Z_j\subset (\BC^*)_j^2 \subset \BC_j^2$ in $\BC_j^2$ is an algebraic set
consisting of components of dimension 0 or 1 \cite{Zh}. The union of all the
1-dimension components is defined by a single polynomial $A_j \in
\BZ[M_j,L_j]$ whose coefficients are co-prime. Note that $A_j$ is defined up to $\pm 1$. 
We will call $[A_1(M_1, L_1), \cdots, A_\ell(M_\ell, L_\ell)]$ the A-polynomial $\ell$-tuple of $\CL$. 
For brevity, we also write $A_j(M,L)$ for $A_j(M_j, L_j)$. We refer the reader to \cite{Zh} for properties of the A-polynomial $\ell$-tuple.


\section{Double twist links $J(2m+1,2n+1)$} \label{sec:DTL}

In this section  we compute nonabelian $SL_2(\BC)$-representations of the double twist link $J(2m+1,2n+1)$. They are described by the Chebyshev polynomials of the second kind, and so we first recall some properties of these polynomials.

\subsection{Chebyshev polynomials} Recall that $\{S_j(v)\}_{j \in \BZ}$ is the sequence of the Chebychev polynomials of the second kind defined by 
$S_0(v)=1$, $S_1(v)=v$ and $S_{j}(v) = v S_{j-1}(v) - S_{j-2}(v)$ 
for all integers $j$. The following two lemmas are elementary, see e.g. \cite{Tr-Whitehead}.

\begin{lemma} \label{chev1}
For any integer $j$ we have
$$S^2_j(v) + S^2_{j-1}(v) - v S_j(v) S_{j-1}(v) =1.$$
\end{lemma}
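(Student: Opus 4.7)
The plan is a straightforward induction on $j$ based on the three-term recurrence defining $S_j$.

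First I would establish the base case $j=0$. Rewriting the recurrence $S_j = v S_{j-1} - S_{j-2}$ at $j=1$ gives $v = v\cdot 1 - S_{-1}$, hence $S_{-1}=0$. The identity at $j=0$ then reads $1 + 0 - v\cdot 1\cdot 0 = 1$, which is true.

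Next I would show that the quantity
$$f(j) := S_j^2(v) + S_{j-1}^2(v) - v\, S_j(v)\, S_{j-1}(v)$$
is invariant under $j \mapsto j+1$. Using $S_{j+1} = v S_j - S_{j-1}$, a direct expansion gives
$$S_{j+1}^2 = v^2 S_j^2 - 2 v\, S_j S_{j-1} + S_{j-1}^2, \qquad v\, S_{j+1} S_j = v^2 S_j^2 - v\, S_j S_{j-1},$$
and subtracting these together with $S_j^2$ yields
$$f(j+1) = S_{j+1}^2 + S_j^2 - v\, S_{j+1} S_j = S_j^2 + S_{j-1}^2 - v\, S_j S_{j-1} = f(j).$$
Combined with the base case, this proves the identity for all $j \ge 0$. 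For negative $j$ I would run exactly the same argument with the recurrence written in the reverse direction $S_{j-2} = v S_{j-1} - S_j$, which gives $f(j-1)=f(j)$ and extends the invariance to $j<0$.

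There is no real obstacle here; the only mild subtlety is making sure the induction is carried out in both directions so that the identity is valid for all $j \in \BZ$ as stated, which is handled by the symmetry of the recurrence noted in the previous paragraph.
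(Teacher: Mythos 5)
Your proof is correct. The paper does not actually prove Lemma \ref{chev1} --- it merely declares it elementary and points to a reference --- so your two-sided induction on the recurrence, including the determination $S_{-1}=0$ needed for the base case $j=0$ and the reversed recurrence $S_{j-2}=vS_{j-1}-S_j$ to cover negative indices, supplies precisely the standard argument the paper leaves implicit, and every step checks out.
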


\begin{lemma} \label{chev2}
Suppose $V \in SL_2(\BC)$ and $v=\tr V$. For any integer $j$ we have
$$V^j = S_{j}(v) \mathbf{1} - S_{j-1}(v) V^{-1}$$
where $\mathbf{1}$ denotes the $2 \times 2$ identity matrix.
\end{lemma}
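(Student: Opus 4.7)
The statement is a Cayley--Hamilton / Chebyshev identity, so my proof plan will be a straightforward two-sided induction based on the minimal polynomial of $V$.

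The starting observation is that $V \in SL_2(\BC)$ with $\tr V = v$ satisfies $V^2 - vV + \mathbf{1} = 0$ by the Cayley--Hamilton theorem, which I can rewrite (multiplying by $V^{-1}$) in the form
\[
V = v\,\mathbf{1} - V^{-1}.
\]
This single identity, together with the defining recursion $S_j(v) = vS_{j-1}(v) - S_{j-2}(v)$, is all that is needed.

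Next I verify the base cases. Applying the Chebyshev recursion at $j=1$ to the initial conditions $S_0 = 1$ and $S_1 = v$ forces $S_{-1}(v)=0$. Hence
\[
S_0(v)\mathbf{1} - S_{-1}(v)V^{-1} = \mathbf{1} = V^0,
\qquad
S_1(v)\mathbf{1} - S_0(v)V^{-1} = v\mathbf{1} - V^{-1} = V,
\]
establishing the formula at $j = 0$ and $j = 1$.

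For the inductive step, assuming the identity at $j-1$, I multiply on the left by $V$ and substitute $V = v\mathbf{1} - V^{-1}$:
\begin{align*}
V^{j} &= V\bigl( S_{j-1}(v)\mathbf{1} - S_{j-2}(v)V^{-1}\bigr)
= S_{j-1}(v)\,V - S_{j-2}(v)\,\mathbf{1}\\
&= S_{j-1}(v)\bigl(v\mathbf{1} - V^{-1}\bigr) - S_{j-2}(v)\,\mathbf{1}
= \bigl(vS_{j-1}(v)-S_{j-2}(v)\bigr)\mathbf{1} - S_{j-1}(v)\,V^{-1},
\end{align*}
which equals $S_{j}(v)\mathbf{1} - S_{j-1}(v)V^{-1}$ by the Chebyshev recursion. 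To handle negative indices, I run the same induction downward: rewrite the recursion as $S_{j-2}(v) = vS_{j-1}(v) - S_{j}(v)$ and multiply by $V^{-1}$ on the left (or, equivalently, apply the already-proved identity to $V^{-1}$, whose trace is also $v$). This extends the formula to all $j \in \BZ$.

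There is really no substantive obstacle here; the only point requiring a bit of care is the bookkeeping for negative $j$, which is handled by noting that the Chebyshev recursion is symmetric under $j \leftrightarrow 2-j$ (since $S_{-1}=0$ makes $S_{-j} = -S_{j-2}$ hold formally) and that $V^{-1}$ plays a role symmetric to $V$ in the identity $V + V^{-1} = v\mathbf{1}$.
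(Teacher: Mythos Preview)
Your argument is correct: the Cayley--Hamilton relation $V = v\mathbf{1} - V^{-1}$ together with the two base cases and the Chebyshev recursion gives the identity for all nonnegative $j$, and the downward induction (or the symmetry under $V \leftrightarrow V^{-1}$) extends it to all integers. The paper does not actually supply a proof of this lemma; it simply records it as elementary and refers to \cite{Tr-Whitehead}, so there is nothing to compare against beyond noting that your approach is the standard one.
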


We will need the following lemma in the last section of the paper.

\begin{lemma} \label{chev3}
For any integer $j$ we have
$$S_j(z) S_{j-1}(z)=(z^2-2)S_{j-1}(z) S_{j-2}(z)-S_{j-2}(z) S_{j-3}(z)+z.$$
\end{lemma}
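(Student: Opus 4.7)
The plan is to prove the identity by a direct algebraic manipulation rather than by induction: the Chebyshev recurrence $S_j(z) = z S_{j-1}(z) - S_{j-2}(z)$ lets one rewrite both $S_j S_{j-1}$ and $S_{j-2} S_{j-3}$ as expressions in only $S_{j-1}$ and $S_{j-2}$, after which Lemma \ref{chev1} collapses everything into the claimed form.

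Specifically, I would first substitute $S_j = z S_{j-1} - S_{j-2}$ to get
\[
S_j(z) S_{j-1}(z) = z S_{j-1}(z)^2 - S_{j-1}(z) S_{j-2}(z).
\]
Next, applying the recurrence in the reversed form $S_{j-3} = z S_{j-2} - S_{j-1}$ (obtained from $S_{j-1} = z S_{j-2} - S_{j-3}$), I would similarly obtain
\[
S_{j-2}(z) S_{j-3}(z) = z S_{j-2}(z)^2 - S_{j-1}(z) S_{j-2}(z).
\]
Adding the two expressions yields
\[
S_j S_{j-1} + S_{j-2} S_{j-3} = z\bigl(S_{j-1}^2 + S_{j-2}^2\bigr) - 2 S_{j-1} S_{j-2}.
\]

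At this point the key step is to invoke Lemma \ref{chev1} with index $j-1$, which gives $S_{j-1}^2 + S_{j-2}^2 = 1 + z S_{j-1} S_{j-2}$. Substituting this in and simplifying turns the right-hand side into $z + (z^2 - 2) S_{j-1} S_{j-2}$, and rearranging produces exactly the identity in the statement. There is no real obstacle here; the only ingredient beyond the defining recurrence is the quadratic invariant of Lemma \ref{chev1}, and since the identity holds for all $j \in \BZ$ there is no base case to worry about, because the recurrence (and hence the computation above) is valid for every integer $j$.
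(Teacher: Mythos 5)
Your proof is correct and follows essentially the same route as the paper: expand $S_j S_{j-1} + S_{j-2} S_{j-3}$ via the recurrence to get $z(S_{j-1}^2 + S_{j-2}^2) - 2S_{j-1}S_{j-2}$, then apply Lemma \ref{chev1}. No issues.
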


\begin{proof}
We have $S_j(z) S_{j-1}(z) + S_{j-2}(z) S_{j-3}(z)$
\begin{eqnarray*}
&=& (z S_{j-1}(z) - S_{j-2}(z)) S_{j-1}(z) + S_{j-2}(z) (z S_{j-2}(z) - S_{j-1}(z)) \\
&=& z (S^2_{j-1}(z) + S^2_{j-2}(z)) - 2 S_{j-1}(z) S_{j-2}(z).
\end{eqnarray*}
The lemma follows, since $S^2_{j-1}(z) + S^2_{j-2}(z) = 1 + z S_{j-1}(z) S_{j-2}(z)$ by Lemma \ref{chev1}.
\end{proof}

\subsection{Nonabelian reprsentations} \label{nab}

In this subsection we study representations of link groups into $SL_2(\BC)$. 
A representation is called nonabelian if its image is a nonabelian subgroup of $SL_2(\BC)$. 
Let $\CL = J(2m+1,2n+1)$ 
and $E_\CL = S^3 \setminus \CL$ the link exterior. By \cite{PT} (and \cite{MPL} also) the link group of $\CL$ has a two-generator presentation
$$\pi_1(E_\CL) = \la a, b \mid a w = wa \ra,$$
where  $w=(b^{-1}a)^m \big[ (ba^{-1})^m ba (b^{-1}a)^m \big]^n$ and $a,b$ are meridians depicted in Figure 1.

Suppose $\rho: \pi_1(E_\CL) \to SL_2(\C)$ is a nonabelian representation. 
Up to conjugation, we may assume that 
\begin{equation} \label{rep}
\rho(a) = \left[ \begin{array}{cc}
s_1 & 1 \\
0 & s_1^{-1} \end{array} \right] \quad \text{and} \quad 
\rho(b) = \left[ \begin{array}{cc}
s_2 & 0 \\
u & s_2^{-1} \end{array} \right]
\end{equation}
where $(u, s_1, s_2) \in (\C^*)^3$ satisfies the matrix equation $\rho(aw) = \rho(wa)$. 
For any word $v$ in 2 letters $a$ and $b$, we write $\rho(v) = \left[ \begin{array}{cc}
v_{11} & v_{12} \\
v_{21} & v_{22} \end{array} \right]$.  Then, by Riley \cite{Ri}, $w_{12}$ can be written as $w_{21} = u w'_{21}$ for some $w'_{12} \in \BC[s_1^{\pm 1},s_2^{\pm 1}, u]$ and the matrix equation $\rho(aw) = \rho(wa)$ is equivalent to the single equation $w'_{12}=0$. We call $w'_{12}$ the Riley polynomial of $\CL$.

We now compute $w'_{12}$ explicitly. Let $x = \tr \rho(a) = s_1 + s_1^{-1}$, $y = \tr \rho(b) = s_2 +s_2^{-1}$ and $z= \tr \rho(ab^{-1}) = s_1 s_2^{-1} + s_1^{-1} s_2 -u$. 

Let $c=(b^{-1}a)^m$ and $d = (ba^{-1})^m ba (b^{-1}a)^m = bc^{-1} a c$. Then $w=c d^n$. Since 
$$\rho(b^{-1}a) = \left[
\begin{array}{cc}
s_1 s_2^{-1} & s_2^{-1} \\
-s_1 u & s_1^{-1} s_2 - u \\
\end{array}
\right],$$ by Lemma \ref{chev2}
we have $\rho(c) = \left[
\begin{array}{cc}
c_{11} & c_{12} \\
c_{21} & c_{22} \\
\end{array}
\right]$ where 
\begin{eqnarray*}
c_{11} &=& S_m(z) - (s_1^{-1} s_2 - u) S_{m-1}(z), \\
c_{12} &=& s_2^{-1}S_{m-1}(z),\\
c_{21} &=& -s_1 u S_{m-1}(z), \\ 
c_{22} &=& S_m(z) - s_1 s_2^{-1} S_{m-1}(z).
\end{eqnarray*}
By a direct computation we then have $\rho(d) = \rho(bc^{-1} a c) = \left[
\begin{array}{cc}
d_{11} & d_{12} \\
d_{21} & d_{22} \\
\end{array}
\right]$
where
\begin{eqnarray*}
d_{11} &=& s_1 s_2 S^2_m(z) - (s_1^2 + s_2^2) S_m(z) S_{m-1}(z) + (s_1 s_2 + u) S^2_{m-1}(z),\\
d_{12} &=& s_2 S^2_m(z) - (s_1 + s_1^{-1}) S_m(z) S_{m-1}(z) + s_2^{-1} S^2_{m-1}(z),\\
d_{21} &=& u \big( s_1 S^2_m(z) - (s_2 + s_2^{-1}) S_m(z) S_{m-1}(z) + s_1^{-1} S^2_{m-1}(z) \big),\\
d_{22} &=& (s_1^{-1} s_2^{-1} + u) S^2_m(z) - (s_1^{-2} + s_2^{-2}) S_m(z) S_{m-1}(z) +  s_1^{-1} s_2^{-1} S^2_{m-1}(z).
\end{eqnarray*}
Let $t = \tr \rho(d)$. From the above computations we have 
\begin{eqnarray*}
t &=& (s_1 s_2 + s_1^{-1} s_2^{-1} + u) \big( S^2_m(z) + S^2_{m-1}(z) \big) - (s_1^2 + s_1^{-2} + s_2^2  + s_2^{-2}) S_m(z) S_{m-1}(z) \\
&=& (xy-z) \big( S^2_m(z) + S^2_{m-1}(z) \big) - (x^2+y^2-4) S_m(z) S_{m-1}(z).
\end{eqnarray*}

Since $w=c d^n$, by Lemma \ref{chev2} we have 
$$\rho(w) = \left[
\begin{array}{cc}
c_{11} & c_{12} \\
c_{21} & c_{22} \\
\end{array}
\right] \left[
\begin{array}{cc}
S_n(t) - d_{22} S_{n-1}(t)  & d_{12} S_{n-1}(t) \\
d_{21} S_{n-1}(t) & S_n(t) - d_{11} S_{n-1}(t) \\
\end{array}
\right].$$
With $\rho(w) = \left[ \begin{array}{cc}
w_{11} & w_{12} \\
w_{21} & w_{22} \end{array} \right]$ we obtain
\begin{eqnarray*}
w_{11} &=& c_{11} \big( S_n(t) - d_{22} S_{n-1}(t) \big) + c_{12} d_{21}  S_{n-1}(t),\\
w_{21} &=& c_{21} \big( S_n(t) - d_{22} S_{n-1}(t) \big) + c_{22} d_{21} S_{n-1}(t) .
\end{eqnarray*}

By direct computations we have $w_{21} = u s_1 \big(S_m(z) S_{n-1}(t) - S_{m-1}(z) S_n(t) \big)$ and
\begin{eqnarray*}
w_{11} &=& - S_{n-1}(t) \big\{ (s_1 s_2^{-1} + s_1^{-1} s_2 + s_1^{-1} s_2^{-1} -z) S_m(z) - s_1^{-2} S_{m-1}(z) \big\} \\
       && + \, S_n(t) \big( S_m(z) + (s_1 s_2^{-1} - z) S_{m-1}(z) \big).        
\end{eqnarray*}
Hence, the Riley polynomial of $\CL = J(2m+1,2n+1)$ is $$w'_{21} = S_m(z) S_{n-1}(t) - S_{m-1}(z) S_n(t).$$ It determines the nonabelian $SL_2(\BC)$-character variety of $\CL$, which is essentially the set of all nonabelian representations $\rho: \pi_1(E_\CL) \to SL_2(\BC)$ up to conjugation. Moreover, for any nonabelian representation $\rho$ of the form \eqref{rep} we have 
$\rho(w) = \left[ \begin{array}{cc}
w_{11} & * \\
0 & (w_{11})^{-1} \end{array} \right]$ where 
\begin{equation} \label{w11}
w_{11} = - S_{n-1}(t) \big\{ (s^{-1}_1 s_2  + s_1^{-1} s_2^{-1} ) S_m(z) - s_1^{-2} S_{m-1}(z) \big\} +  S_n(t) S_m(z) .        
\end{equation}

Let $\overline{w}$ is the word obtained from $w$ by exchanging $a$ and $b$, namely $$\overline{w} = (a^{-1}b)^m \big[ (ab^{-1})^m ab (a^{-1}b)^m \big]^n.$$
It is easy to see that the equation $aw = wa$ is equivalent to $\overline{w} b = b \overline{w}$. Moreover, for any nonabelian representation $\rho$ of the form \eqref{rep} we have 
$\rho(\overline{w}) = \left[ \begin{array}{cc}
\overline{w}_{11} & 0\\
* & (\overline{w}_{11})^{-1} \end{array} \right]$ where 
\begin{equation} \label{ow11}
\overline{w}_{11} = - S_{n-1}(t) \big\{ (s_1 s_2^{-1} + s_1^{-1} s_2^{-1}) S_m(z) - s_2^{-2} S_{m-1}(z) \big\} +  S_n(t) S_m(z).        
\end{equation}

\begin{remark} \label{fact}
The above formula for the nonabelian $SL_2(\BC)$-character variety of the double twist link $\CL = J(2m+1, 2n+1)$
was already obtained in \cite{PT} by a different method. Moreover, it was also shown in \cite{PT} that the nonabelian character variety of $\CL$ is reducible if and only if $m=n$. In which case, it has exactly 2 irreducible components and the canonical component is determined by the equation $t=z$.  
\end{remark}

From now on we consider only the double twist link $\CL_m = J(2m+1, 2m+1)$, where $m \not= -1, 0$. As mentioned above, the canonical component of the character variety of $\CL_m$ is given by the equation $t=z$ where
\begin{equation} \label{t}
t = (xy-z) \big( S^2_m(z) + S^2_{m-1}(z) \big) - (x^2+y^2-4) S_m(z) S_{m-1}(z).        
\end{equation}

\section{Volume of hyperbolic cone-manifolds of $\CL_m$} \label{sec:volume}

Recall that $E_{\CL_m}(\alpha)$ is the cone-manifold of $\CL_m$ with cone angles $\alpha_1 = \alpha_2 = \alpha$. 
There exists an angle $\alpha_{\CL_m} \in [\frac{2\pi}{3},\pi)$ such that $E_{\CL_m}(\alpha)$ is hyperbolic for $\alpha \in (0, \alpha_{\CL_m})$, Euclidean for $\alpha =\alpha_{\CL_m}$, and spherical for $\alpha \in (\alpha_{\CL_m}, \pi)$.

For $\alpha \in (0, \alpha_{\CL_m})$, by the Schlafli formula we have 
$$
\text{Vol} \, E_{\CL_m}(\alpha) 
= \int_{\alpha}^{\pi} 2\log \left| w_{11} \right| d\omega 
$$
where $w_{11}$ is the $(1,1)$-entry of the matrix $\rho(w)$ and $\rho: \pi_1(\CL_m) \to SL_2(\BC)$ is a representation of the form \eqref{rep} 
such that the following 3 conditions hold:
\begin{enumerate}[(i)]
\item $s_1 = s_2 = s = e^{i\omega/2}$,
\item the character $\chi_\rho$ of $\rho$ lies on the canonical component of the character variety of $\CL_m$,
\item $|w_{11}| \ge 1$.
\end{enumerate}
We refer the reader to \cite{HLM, HLMR-link} and references therein for the volume formula of hyperbolic cone-manifolds of
links using the Schlafli formula.

We now simplify $w_{11}$ for representations $\rho$ of the form \eqref{rep} satisfying the conditions (i)--(iii). Consider the canonical component $t=z$ of the character variety of $\CL_m$. With $s_1 = s_2 = s = e^{i\omega/2}$, equation \eqref{w11} implies that
\begin{eqnarray*}
w_{11} &=& - S_{m-1}(z) \big\{ (1+s^{-2}) S_m(z) - s^{-2} S_{m-1}(z) \big\} + S^2_m(z) \\
&=& \big( S_m(z) - S_{m-1}(z) \big) \big( S_m(z) - s^{-2}S_{m-1}(z) \big).
\end{eqnarray*}
Moreover, the equation $t=z$ can be written as  $$(s^2+s^{-2}+2-z) \big( S^2_m(z) + S^2_{m-1}(z) \big) - 2(s^2+s^{-2}) S_m(z) S_{m-1}(z)=z.$$
This, together with $S^2_m(z) + S^2_{m-1}(z) = 1 + z S_m(z) S_{m-1}(z)$ (by Lemma \ref{chev2}), implies that 
\begin{eqnarray*}
S_m(z) S_{m-1}(z) &=& \frac{2z-(s^2+s^{-2}+2)}{(z-2)(s^2+s^{-2}-z)},\\
S^2_m(z) + S^2_{m-1}(z) &=& \frac{z^2-2(s^2+s^{-2})}{(z-2)(s^2+s^{-2}-z)}.
\end{eqnarray*}
Then $\big( S_m(z) - S_{m-1}(z) \big)^2 = S^2_m(z) + S^2_{m-1}(z) - 2 S_m(z) S_{m-1}(z) = \frac{z-2}{s^2 + s^{-2} -z}$ and
\begin{eqnarray*}
( S_m(z) - s^2 S_{m-1}(z) ) ( S_m(z) - s^{-2}S_{m-1}(z) ) 
&=& S^2_m(z) + S^2_{m-1}(z) - (s^2+s^{-2}) S_m(z) S_{m-1}(z) \\
&=&  \frac{s^2+s^{-2}-z}{z-2}.
\end{eqnarray*}
It follows that $\big( S_m(z) - S_{m-1}(z) \big)^2 \big( S_m(z) - s^2 S_{m-1}(z) \big) \big( S_m(z) - s^{-2}S_{m-1}(z) \big)=1$ and
$$w^2_{11} = \big( S_m(z) - S_{m-1}(z) \big)^2 \big( S_m(z) - s^{-2}S_{m-1}(z) \big)^2 = \frac{S_m(z) - s^{-2}S_{m-1}(z)}{S_m(z) - s^{2}S_{m-1}(z)}.$$
Note that $|S_m(z) - e^{-i\omega} S_{m-1}(z)| \ge |S_m(z) - e^{i\omega} S_{m-1}(z)|$ if and only if $\text{Im}( S_{m-1}(z)\overline{S_m(z)}) \ge 0$. Hence, for $\alpha \in (0, \alpha_{\CL_m})$, by the Schlafli formula we have 
$$\text{Vol} \, E_{\CL_m}(\alpha) 
= \int_{\alpha}^{\pi} 2\log \left| w_{11} \right| d\omega 
= \int_{\alpha}^{\pi} \log \left| \frac{S_m(z) - s^{-2} S_{m-1}(z)}{S_m(z) - s^2 S_{m-1}(z)} \right|  d\omega$$
where $s = e^{i\omega/2}$ and $z$, with $\text{Im} ( S_{m-1}(z)\overline{S_m(z)} ) \ge 0$, satisfy $$(s^2+s^{-2}+2-z) \big( S^2_m(z) + S^2_{m-1}(z) \big) - 2(s^2+s^{-2}) S_m(z) S_{m-1}(z) -z =0.$$
This completes the proof of Theorem \ref{thm:volume}.

\section{The A-polynomial 2-tuple of $\CL_m$} \label{sec:A-polynomial}

The canonical longitudes corresponding to the meridians $a$ and $b$ of $J(2m+1, 2n+1)$ are respectively
$\lambda_a = wa^{-2n}$ and $\lambda_b = \overline{w} b^{-2n}$, 
where $\overline{w} = (a^{-1}b)^m \big[ (ab^{-1})^m ab (a^{-1}b)^m \big]^n$ is the word obtained from $w$ by exchanging $a$ and $b$. 

Consider the canonical component $t=z$ of the character variety of $\CL_m = J(2m+1,2m+1)$. 
To compute the A-polynomial 2-tuple for this component, 
we first consider a representation $\rho: \pi_1(\CL_m) \to SL_2(\BC)$ of the form \eqref{rep} 
and find a polynomial relating $s_1$ and $w_{11}$ when both $t=z$ and $s^2_2 = (\overline{w}_{11})^2=1$ occur. 
Recall from Subsection \ref{nab} that $w_{11}$ and $\overline{w}_{11}$ are upper left entries of $\rho(w)$ and $\rho(\overline{w})$ respectively.

With $t=z$ and $s_2=1$, by equations \eqref{w11} and \eqref{ow11} we have 
\begin{eqnarray*}
w_{11} &=& - S_{m-1}(m) \big\{ 2 s_1^{-1}  S_m(z) - s_1^{-2} S_{m-1}(z) \big\} + S^2_m(z) \\
&=& ( S_m(z) - s_1^{-1}S_{m-1}(z) )^2
\end{eqnarray*}
and
\begin{eqnarray*}
\overline{w}_{11} &=& - S_{m-1}(z) \big\{ (s_1 + s_1^{-1} ) S_m(z) - S_{m-1}(z) \big\} +  S^2_m(z). \\
&=& ( S_m(z) - s_1 S_{m-1}(z) ) ( S_m(z) - s_1^{-1}S_{m-1}(z) ) .  
\end{eqnarray*}
Moreover, since $S^2_m(z) + S^2_{m-1}(z) = 1 + z S_m(z) S_{m-1}(z)$, the equation $t=z$ becomes  
\begin{eqnarray*}
0 &=& (2x-z) ( S^2_m(z) + S^2_{m-1}(z) ) - x^2 S_m(z) S_{m-1}(z) -z \\
 &=&  (2x-z) (1 + z S_m(z) S_{m-1}(z)) - x^2 S_m(z) S_{m-1}(z) -z \\
 &=& (x-z) \left( 2+(z-x)S_m(z) S_{m-1}(z)\right).
\end{eqnarray*}

Suppose $z-x=0$. Then $\overline{w}_{11} = - S_{m-1}(z) \big\{ z S_m(z) - S_{m-1}(z) \big\} +  S^2_m(z)=1$ and $$w_{11} = \big( S_m(x) - s^{-1}S_{m-1}(x) \big)^2=s^{2m}.$$
Here we use the fact that $S_j(s_1+s_1^{-1}) = (s_1^{j+1} - s_1^{-j-1})/(s_1-s_1^{-1})$ for all integers $j$.

Suppose $2+(z-x)S_m(z) S_{m-1}(z)=0$. This is equivalent to 
\begin{equation} \label{factor2}
\big( S_m(z) - s_1 S_{m-1}(z) \big) \big( S_m(z) - s_1^{-1}S_{m-1}(z) \big) = -1,
\end{equation}
since $S^2_m(z) + S^2_{m-1}(z) = 1 + z S_m(z) S_{m-1}(z)$. 
It follows that $\overline{w}_{11} = -1$ and $$w_{11} = \big( S_m(z) - s_1^{-1}S_{m-1}(z) \big)^2 = - \frac{S_m(z) - s_1^{-1}S_{m-1}(z)}{S_m(z) - s_1 S_{m-1}(z)}.$$  
Hence $S_m(z) = r S_{m-1}(z)$ where $r= \frac{s_1w_{11}+s_1^{-1}}{w_{11}+1}$. 
We have
$$1 = S^2_m(z) + S^2_{m-1}(z) - z S_m(z) S_{m-1}(z)  = S^2_{m-1}(z) (1 - z r + r^2),$$
which implies that $S^2_{m-1}(z) = (1 - z r + r^2)^{-1}.$ Equation \eqref{factor2} then becomes 
$$- 1 = S^2_{m-1}(z) (r - s_1)(r - s_1^{-1}) =  (r - s_1)(r - s_1^{-1})/(1 - z r + r^2).$$
By solving for $z$ from the above equation, we obtain
$$
z = 2 \left( r + \frac{1}{r} \right) - (s_1+s_1^{-1}) 
= 2 \left( \frac{s_1w_{11}+s_1^{-1}}{w_{11}+1} + \frac{w_{11}+1}{s_1w_{11}+s_1^{-1}} \right) - (s_1+s_1^{-1}).
$$
Now, by plugging this expression of $z$ into the equation $2+(z-x)S_m(z) S_{m-1}(z)=0$ we obtain a polynomial (depending on $m$) relating $s_1$ and $w_{11}$. Moreover, we can find a recurrence relation between these polynomials as follows. 

Let $P_m(x,z) = 2+(z-x)S_m(z) S_{m-1}(z)$. By Lemma \ref{chev3} we have $S_m(z) S_{m-1}(z)=(z^2-2)S_{m-1}(z) S_{m-2}(z)-S_{m-2}(z) S_{m-3}(z)+z$. This implies that
\begin{eqnarray*}
P_m &=& 2 + (z^2-2)(P_{m-1}-2)-(P_{m-2}-2)+z(z-x)\\
    &=& (z^2-2) P_{m-1} - P_{m-2} + 8 - z(z+x).
\end{eqnarray*}
Let $Q_m(s_1,w_{11})=s_1^2(w_{11}+1)^2(s_1^2w_{11}+1)^2 P_m(x,z)$. By replacing 
$$z=2 \left( \frac{s_1w_{11}+s_1^{-1}}{w_{11}+1} + \frac{w_{11}+1}{s_1w_{11}+s_1^{-1}} \right) - (s_1+s_1^{-1})$$ 
into the above recurrence relation for $P_m$ we have
$$Q_m = \alpha Q_{m-1} - Q_{m-2} +\beta$$ where
\begin{eqnarray*}
\alpha &=& (s_1^8+s_1^4) w_{11}^4+(-2 s_1^8+6 s_1^6+6 s_1^4-2 s_1^2) w_{11}^3 + (s_1^8-12 s_1^6+34 s_1^4-12 s_1^2+1) w_{11}^2\\
&& \qquad \qquad \quad \, \, \, + \, (-2 s_1^6+6 s_1^4+6 s_1^2-2) w_{11}+s_1^4+1,\\
\beta &=& -2(s_1^2-1)^2 \left( s_1^4 w_{11}^4 - (s_1^4+s_1^2) w_{11}^3 - 6 s_1^2 w_{11}^2 - (s_1^2+1)w_{11}+1\right).
\end{eqnarray*}

We have shown that $(\overline{w}_{11})^2=1$ and $(w_{11}-s_1^{2m})Q(s_1, w_{11})=0$ when both $t=z$ and $s_2=1$ occur. 
The same holds true when  both $t=z$ and $s_2=-1$ occur. 
This implies that $(w_{11}-s_1^{2m})Q(s_1, w_{11})=0$ when both $t=z$ and $s^2_2 = (\overline{w}_{11})^2=1$ occur. 

Similarly, we have $(\overline{w}_{11}-s_2^{2m})Q(s_2, \overline{w}_{11})=0$ when both $t=z$ and $s^2_1 = (w_{11})^2=1$ occur. 
Since the canonical longitudes corresponding to the meridians $a$ and $b$ of $\CL_m = J(2m+1, 2m+1)$ are respectively
$\lambda_a = wa^{-2m}$ and $\lambda_b = \overline{w} b^{-2m}$, we conclude that 
the A-polynomial 2-tuple corresponding to the canonical component of the character variety of $\CL_m$ is  
$[A(M,L), A(M,L)]$ where $A(M,L) = (L-1)Q_m(M, LM^{2m})$. 

This completes the proof of Theorem \ref{thm:A-polynomial}.

\section*{Acknowledgements} 
The author has been partially supported by a grant from the Simons Foundation (\#354595 to Anh Tran).


\end{document}